\theoremstyle{plain}%
 \newtheorem{theorem}{Theorem}
\theoremstyle{remark}
\theoremstyle{definition}
\begin{document}

\begin{center}
{\Large A further $q$-analogue of a formula due to Guillera}
\end{center}

\begin{center}
{\textsc{John M. Campbell}} 

 \ 

\end{center}

\begin{abstract}
 Hou, Krattenthaler, and Sun have introduced two $q$-analogues of a remarkable series for $\pi^2$ due to Guillera, and these $q$-identities were, 
 respectively, proved with the use of a $q$-analogue of a Wilf--Zeilberger pair provided by Guillera and with the use of ${}_{3}\phi_{2}$-transforms. We 
 prove a $q$-analogue of Guillera's formula for $\pi^2$ that is inequivalent to previously known $q$-analogues of the same formula due to Guillera, including 
 the Hou--Krattenthaler--Sun $q$-identities and a subsequent $q$-identity due to Wei. In contrast to previously known $q$-analogues of Guillera's formula, 
 our new $q$-analogue involves another free parameter apart from the $q$-parameter. Our derivation of this new result relies on the $q$-analogue of 
 Zeilberger's algorithm. 
\end{abstract}

\noindent {\footnotesize \emph{MSC:} 05A30, 11B65}

\noindent {\footnotesize \emph{Keywords:} $q$-binomial coefficient, $q$-analogue, $q$-shifted factorial, difference equation}

\section{Introduction}
 Let the Pochhammer symbol be denoted so that $(x)_{n} = x(x+1) \cdots (x + n - 1)$, with $(x)_{0} = 1$, and let 
\begin{equation*}
 \left[ \begin{matrix} \alpha, \beta, \ldots, \gamma \vspace{1mm} \\ 
 A, B, \ldots, C \end{matrix} \right]_{n} = \frac{ (\alpha)_{n} (\beta)_{n} 
 	\cdots (\gamma)_{n} }{ (A)_{n} (B)_{n} \cdots (C)_{n}}. 
\end{equation*}
 A remarkable result due to Guillera \cite{Guillera2008} is such that 
\begin{equation}\label{Guilleramain}
 \frac{\pi^2}{4} = \sum_{n=0}^{\infty} \left( \frac{1}{4} \right)^{n} 
 \left[ \begin{matrix} 1, 1, 1 \vspace{1mm} \\ 
 \frac{3}{2}, \frac{3}{2}, \frac{3}{2} \end{matrix} \right]_{n} (3n+2) 
\end{equation}
 and is closely related to Ramanujan's series 
 for $\frac{1}{\pi}$ \cite{Ramanujan1914} of 
 the same convergence rate as in the hypergeometric sum in \eqref{Guilleramain}: 
\begin{equation*}
 \frac{4}{\pi} = \sum_{n=0}^{\infty} \left( \frac{1}{4} \right)^{n} 
 \left[ \begin{matrix} \frac{1}{2}, \frac{1}{2}, \frac{1}{2} \vspace{1mm} \\ 
 1, 1, 1 \end{matrix} \right]_{n} (6n + 1). 
\end{equation*}
 The formula in \eqref{Guilleramain} was proved by 
 Guillera \cite{Guillera2008} via the \emph{Wilf--Zeilberger} (WZ) method \cite{PetkovsekWilfZeilberger1996}, 
 and with the use of the bivariate functions 
 $F(n, k) = 8 B(n, k) n$ and $G(n, k) = B(n, k) (6n + 4k + 1)$, writing 
 $$ B(n, k) = \frac{1}{2^{8n} 2^{4k}} \frac{((2k)!)^2 ((2n)!)^3 }{ ((n+k)!)^2 (k!)^2 (n!)^4}. $$ 
 As a main result in a 2019 contribution to the \emph{Proceedings of the American Mathematical Society} 
 \cite{HouKrattenthalerSun2019}, two $q$-analogues of Guillera's formula in 
 \eqref{Guilleramain} were introduced and proved. 
 We introduce and prove a further $q$-analogue of Guillera's formula 
 that is inequivalent to previously known 
 $q$-analogues of  \eqref{Guilleramain}  \cite{ChenChu2021,HouKrattenthalerSun2019,WangZhong2023,Wei2020}. 

\section{Background and preliminaries}
   The $\pi$ formula in \eqref{Guilleramain}    has been proved in a variety of different ways.     For example,     Chu and Zhang \cite{ChuZhang2014} proved   
   \eqref{Guilleramain} using a series acceleration method     derived via the modified Abel lemma on summation by parts      and Dougall's ${}_{5}F_{4}$-sum.   
  Also, Campbell and Levrie applied a series acceleration method      due to Wilf \cite{Wilf1999}      based on Zeilberger's algorithm  
 \cite[\S6]{PetkovsekWilfZeilberger1996}    to formulate another proof of Guillera's formula.   
  See also \cite{Campbell2022,Chu2021,LevrieCampbell2022}. 
 These past references 
 motivate the development of techniques for deriving $q$-analogues of Guillera's 
 formula, to build on the work of 
 Hou, Krattenthaler, and Sun \cite{HouKrattenthalerSun2019}. 

 The \emph{$q$-shifted factorial} is such that 
\begin{equation}\label{qshift}
 (a;q)_{n} = \prod_{k=0}^{n-1} (1-aq^{k}) 
\end{equation}
 for integers $n \geq 0$ and for complex $q$ such that $|q| < 1$, with $(a;q)_{0} = 1$ by convention. This provides an analogue of the Pochhammer 
 symbol in the sense that $ \lim_{q \to 1} \frac{ \left( q^{x}; q \right)_{n} }{ (1-q)^{n} } = (x)_{n}$. 
 With regard to the notation in 
 \eqref{qshift}, we also write 
 $(a;q)_{\infty} = \lim_{n \to \infty} (a;q)_{n}$, 
 so that 
 $ (a;q)_{n} = \frac{ \left( a;q \right)_{\infty} }{ \left( aq^{n}; q \right)_{\infty} }$. 
 The \emph{$q$-binomial coefficient} may be defined via \eqref{qshift} 
 so that 
 $ \binom{n}{k}_{q} = \frac{(q;q)_n}{(q;q)_k (q;q)_{n-k}}$, 
 with the limiting $q \to 1$ case yielding the binomial coefficient $\binom{n}{k}$. 
 A key to our derivation of a new $q$-analogue of 
 Guillera's formula in \eqref{Guilleramain} 
 is given by the {\tt QDifferenceEquations} package for the Maple Computer Algebra System. 

 Theorem 1.2 from the work of Hou et al.\ \cite{HouKrattenthalerSun2019} 
 gives us that 
\begin{align}
\begin{split}
 & \sum_{n=0}^{\infty} q^{2n(n+1)} (1+ q^{2n+2} - 2 q^{4n+3}) 
 \frac{ \left( q^{2}; q^{2} \right)_{n}^{3} }{ \left( q;q^2 \right)_{n+1}^{3} \left( -1; q \right)_{2n+3} } = \\
 & \frac{1}{2} \sum_{n=0}^{\infty} \frac{q^{2n}}{ (1-q^{2n+1})^{2} } 
\end{split}\label{HKS1}
\end{align}
 and that 
\begin{equation}\label{HKS2}
 \sum_{n=0}^{\infty} q^{\frac{n(n+1)}{2}} \frac{1 - q^{3n+2}}{1-q} 
 \frac{ \left( q;q \right)_{n}^{3} \left( -q;q \right)_{n} }{ \left( q^{3}; q^{2} \right)_{n}^{3} } 
 = (1-q)^{2} \frac{ \left( q^{2}; q^{2} \right)_{\infty}^{4} }{ \left( q;q^{2} \right)_{\infty}^{4} }. 
\end{equation}
 As expressed by Hou et al.\ \cite{HouKrattenthalerSun2019}, 
 by multiplying both sides of \eqref{HKS1} by $(1-q)^2$ and by setting $q \to 1$, 
 this provides an equivalent version of Guillera's formula in \eqref{Guilleramain}. Similarly, 
 setting $q \to 1$ in \eqref{HKS2}, 
 and by applying a property associted with the $q$-Gamma function that gives us that 
 $ \lim_{q \to 1} (1-q) \frac{ \left( q^{2}; q^{2} \right)_{\infty}^{2} }{ \left( q;q^2 \right)_{\infty}^{2} } 
 = \frac{\pi}{2}$, 
 we again obtain an equivalent copy of Guillera's   formula. 

\section{Main result} 
  Instead of using a $q$-analogue $(F_{q}, G_{q})$ of the WZ pair $(F, G)$ employed by Guillera,  as in the work of Hou et al.\    
 \cite{HouKrattenthalerSun2019},   with $\lim_{q \to 1} F_{q}(n, k) = F(n, k)$ and similarly for $G_{q}$,   we apply a $q$-analogue of Zeilberger's algorithm  
  based on the $q$-binomial identity such that  $\sum_{k=0}^{n} q^{k^2} \binom{n}{k}_{q}^{2} = \binom{2n}{n}_{q}$.   This gives us a pair of bivariate 
 $q$-hypergeometric functions inequivalent to that  inolved in \cite{HouKrattenthalerSun2019}.  

 Our main result is given by the $q$-identity highlighted as Theorem \ref{maintheorem} below.  Theorem \ref{maintheorem} can be shown, as we later 
 demonstrate, to provide an equivalent   version of Guillera's formula in \eqref{Guilleramain}, by setting $q \to 1$ and $k = \frac{1}{2}$.  Theorem 
 \ref{maintheorem} is inequivalent to  previously known $q$-analogues of \eqref{Guilleramain}  given in  
 \cite{ChenChu2021,HouKrattenthalerSun2019,WangZhong2023,Wei2020}.   An advantage to our main result, as below, is given by how it involves 
 \emph{another}   free parameter $k$, apart from the $q$-parameter, but this is not the case for  previously known $q$-analogues of Guillera's formula.  

\begin{theorem}\label{maintheorem}
  The identity  
\begin{align*}
 & q \sum_{n=0}^{\infty} 
 \frac{q^{n-2 k+k^2} \left(2 q^k-q^{n+1}-q^{2 n+2}\right)}{1+q^{n+1}} 
 \frac{(q;q)_n^4}{(q;q)_{k-1}^2 (q;q)_{2 n+1} (q;q)_{n-k+1}^2} = \\ 
 & \frac{1}{(q;q)_{\infty }^4} \sum_{n=0}^{\infty} q^{(k+n)^2} \left((q;q)_{\infty }^3-\left(q^{1-n - 
 k};q\right)_{\infty }^2\right) \left(q^{1+n+k};q\right)_{\infty }^2 
\end{align*}
 holds for complex $q$ and $k$ such that $|q| < 1$ and $| k | < 1$. 
\end{theorem}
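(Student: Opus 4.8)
The plan is to prove the identity by the $q$-analogue of the Wilf--Zeilberger method, realized as creative telescoping in the free parameter $k$. Write the left-hand summand as
\[
 F(n,k) = q\,\frac{q^{n-2k+k^2}\left(2q^k-q^{n+1}-q^{2n+2}\right)}{1+q^{n+1}}\,\frac{(q;q)_n^4}{(q;q)_{k-1}^2\,(q;q)_{2n+1}\,(q;q)_{n-k+1}^2},
\]
interpreting every $q$-shifted factorial of non-integral index through $(q;q)_{x}=(q;q)_\infty/(q^{x+1};q)_\infty$, so that $F(n,k)$ is a genuine bivariate $q$-hypergeometric term; it is precisely the component of the $q$-WZ pair produced by applying $q$-Zeilberger to the $q$-binomial identity $\sum_k q^{k^2}\binom{n}{k}_q^2=\binom{2n}{n}_q$. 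The first observation is that the right-hand summand depends on $n$ and $k$ only through $m=n+k$: setting $h(m)=q^{m^2}\left((q;q)_\infty^3-(q^{1-m};q)_\infty^2\right)(q^{1+m};q)_\infty^2$, the right-hand side equals $R(k):=(q;q)_\infty^{-4}\sum_{n\ge0}h(n+k)$. Hence $R$ telescopes in $k$, since replacing $k$ by $k+1$ merely deletes the $n=0$ term, giving
\[
 R(k)-R(k+1)=\frac{h(k)}{(q;q)_\infty^{4}}.
\]

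The core of the argument is to show that the left-hand side $L(k):=\sum_{n\ge0}F(n,k)$ obeys this same first-order $q$-difference equation. Feeding $F(n,k)$ to the $q$-Zeilberger algorithm through the {\tt QDifferenceEquations} package, I expect to obtain a companion term $G(n,k)=\rho(q^n,q^k)\,F(n,k)$, for an explicit rational certificate $\rho$, satisfying the $q$-WZ equation
\[
 F(n,k+1)-F(n,k)=G(n+1,k)-G(n,k).
\]
This is an identity between two $q$-hypergeometric terms and is verified mechanically, by cancelling their common factor and reducing it to a polynomial identity in $q^n$ and $q^k$. Summing it over $n\ge0$, the right-hand side telescopes, giving
\[
 L(k+1)-L(k)=\lim_{N\to\infty}G(N,k)-G(0,k).
\]

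The decisive computation is the evaluation of these two boundary contributions. The value $G(0,k)$ is read off directly from the certificate, while $\lim_{N\to\infty}G(N,k)$ is obtained by sending $(q;q)_N\to(q;q)_\infty$ and collecting the surviving infinite products. I expect the combination $\lim_{N\to\infty}G(N,k)-G(0,k)$ to simplify to $-h(k)/(q;q)_\infty^4$, so that $L(k)-L(k+1)=h(k)/(q;q)_\infty^4$, exactly the relation satisfied by $R$. Extracting the product form $h(k)$ — with its interplay of $(q;q)_\infty$, $(q^{1-k};q)_\infty$, and $(q^{1+k};q)_\infty$ — from the raw certificate is the most delicate step, and I anticipate it to be where the bulk of the simplification effort lies.

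Granting these two facts, iterating $L(k)=L(k+1)+h(k)/(q;q)_\infty^4$ a total of $N$ times yields $L(k)=L(k+N)+R_N(k)$, where $R_N(k)\to R(k)$ as $N\to\infty$; it then remains to show $\lim_{N\to\infty}L(k+N)=0$. This is the genuine obstacle: the Gaussian factor $q^{(k+N)^2}$ embedded in $F$ forces decay, but it competes almost exactly against the growth of the reciprocal factorials $(q;q)_{n-k-N+1}^{-2}$ once their index becomes large and negative, so the two contributions are of comparable order in $N$ and a careful asymptotic estimate is required to confirm that the limit vanishes. Equivalently, this decay is what rules out contamination by a period-one solution of the homogeneous $q$-difference equation, a first-order recurrence alone being insufficient to pin down an analytic function of the continuous variable $k$. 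Once the vanishing of $\lim_{N\to\infty}L(k+N)$ is secured, $L(k)=R(k)$ follows and the theorem is established on $|k|<1$; throughout, the hypotheses $|q|<1$ and $|k|<1$ guarantee convergence of all the series and infinite products involved.
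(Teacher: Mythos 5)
Your proposal is correct and is essentially the paper's proof in mirror image: the paper applies the $q$-analogue of Zeilberger's algorithm to $F(n,k)=q^{k^2}\binom{n}{k}_q^2/\binom{2n}{n}_q$ and obtains a certificate term $G(n,k)=R(n,k)F(n,k)$ that is (up to the normalizing factor $-q^{-2}$) exactly your left-hand summand, so the paper's difference equation $q^2F(n+1,k)-q^2F(n,k)=G(n,k+1)-G(n,k)$ is precisely the $q$-WZ equation you postulate, with your companion being $-F$. The remaining steps coincide as well: your boundary combination simplifying to $-h(k)/(q;q)_\infty^4$ is the paper's evaluation of $H(k)$ from $\lim_{m\to\infty}F(m+1,k)$ and $F(0,k)$, and your ``genuine obstacle'' $\lim_{N\to\infty}L(k+N)=0$ is exactly the paper's assertion that $\lim_{m\to\infty}G(n,k+m+1)=0$ combined with its Dominated Convergence interchange, so all of your anticipated simplifications are the identities the paper verifies.
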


\begin{proof}
 By setting 
\begin{equation}\label{Fnkproof}
 F(n, k) := \frac{q^{k^2} \binom{n}{k}_q^2}{\binom{2 n}{n}_q}, 
\end{equation}
 an application of the $q$-analogue of Zeilberger's algorithm produces the certificate $$ R(n, k) := \frac{q^{n+3} \left(1-q^k\right)^2 \left(q^{n + 
 1}+q^{2 n+2}-2 q^k\right)}{\left(q^k-q^{n+1}\right)^2 \left(1+q^{n+1}\right) \left(1-q^{2 n+1}\right)}, $$ which leads us to set $G(n, k) := R(n, 
 k) F(n, k)$. We may thus verify that the difference equation 
\begin{equation}\label{verifydifference}
 q^2 F(n+1,k)-q^2 F(n,k) = G(n,k+1)-G(n,k) 
\end{equation}
 holds true. By summing both sides of \eqref{verifydifference} with respect to 
 $n$, a telescoping phenomenon gives us that 
 $$ q^2 F(m+1,k)-q^2 F(0,k)
 = \sum _{n=0}^m (G(n,k+1)-G(n,k)) $$
 for nonnegative integers $m$. 
 From the definition in \eqref{Fnkproof} together with 
 the given constraints on $q$ and $k$, we find, by setting $m \to \infty$, that 
 $$ \frac{q^{k^2+2} \left((q;q)_{\infty }^3-\left(q^{1-k};q\right)_{\infty }^2\right) \left(q^{k+1};q\right)_{\infty }^2}{(q;q)_{\infty }^4}
 = \sum_{n = 0}^\infty (G(n,k+1)-G(n,k)). $$ 
 Writing $H(k)$ in place of the left-hand side of the above equality, a telescoping phenomenon then gives us that 
\begin{equation}\label{270727470767279717074787P7M1A}
 \sum_{n = 0}^{m} H(k+n) = \sum_{n=0}^{\infty} (G(n, k + m + 1) - G(n, k)). 
\end{equation} 
 We find that $\lim_{m \to \infty} G(n, k + m + 1) $ vanishes, 
 again subject to the given constraints on $q$ and $k$. 
 Setting $m \to \infty$ in \eqref{270727470767279717074787P7M1A}, 
 an application of the Dominated Convergence Theorem allows us to interchange the limiting operations 
 resulting on the right-hand side, 
 yielding 
 $ \sum_{n=0}^{\infty} H(k + n) = -\sum_{n=0}^{\infty} G(n, k)$, 
 which is equivalent to the desired result. 
\end{proof}

 Setting $q \to 1$ on both sides of the $q$-identity given in Theorem \ref{maintheorem}, through an application of reindexing and Carlson's theorem to 
 the classical identity $ \frac{1}{(q;q)_{\infty}} = \sum_{n=0}^{\infty} \frac{q^{n^{2}}}{ (q;q)_{n}^{2} } $ 
 derived from the generating function 
 for the sequence enumerating integer partitions of orders $n \geq 0$, 
 we obtain that 
\begin{align}
 & \frac{1}{2} \sum_{n=0}^{\infty} (2 - 2 k + 3 n) 
 \frac{ (1)_n^4 }{ (1)_{k-1}^2 (1)_{2 n+1} (1)_{-k+n+1}^2} = \nonumber \\ 
 & 1 - \sum_{n = 0}^{\infty} \frac{1}{ (1)_{-k-n}^2 (1)_{k+n}^2} = \label{fromgf} \\ 
 & 1-\frac{\psi ^{(1)}(k) \sin ^2(k \pi )}{\pi ^2}, \nonumber 
\end{align}
 where $\psi^{(1)}$ denotes the trigamma function 
 $\psi^{(1)}(z) = \frac{d^{2}}{dz^{2}} \ln \Gamma(z)$, 
 and where this last equality may be verified inductively, by taking the partial sums
 of the series in \eqref{fromgf}. 
 In particular, setting $k = \frac{1}{2}$ gives us that 
 $$ \pi = \sum_{n = 
 0}^{\infty} \frac{(3 n + 2) (1)_n^4}{ (1)_{n + \frac{1}{2}}^2 (1)_{2 n + 1}} $$ 
 and this is equivlaent to Guillera's formula in 
 \eqref{Guilleramain}, according to the Gauss multiplication formula 
 and the relation $(x)_{n} = \frac{\Gamma(x+n)}{\Gamma(x)}$. 

\section{Conclusion}
        Research related to the Hou--Krattenthaler--Sun $q$-analogues of Guillera's formula,  
  as in     \cite{ChenChu2021,HouSun2021,WangZhong2023,Wei2020}, 
    motivates 
     the exploration of further properties of the $q$-analogue in Theorem \ref{maintheorem}.  
    For example, by again setting $q \to 1$, and  
   by applying the operator $\frac{d}{dk} \cdot \big|_{k = \frac{1}{2}}$  
 to both sides of the resultant identity, 
 we obtain 
 $$ \frac{ 7 \zeta (3)}{2} = \sum _{n=0}^{\infty} 
 \left(\frac{1}{4}\right)^n
 \left[ \begin{matrix} 1, 1, 1 \vspace{1mm} \\ 
 \frac{3}{2}, \frac{3}{2}, \frac{3}{2} \end{matrix} \right]_{n} 
 ((6 n+4) O_{n+1} - 1) $$
 where $O_{m} = \frac{1}{1} + \frac{1}{3} + \cdots + \frac{1}{2m-1}$ denotes the $m^{\text{th}}$
 odd harmonic number, and where $\zeta(3) = \frac{1}{1^3} + \frac{1}{2^3} + \cdots $ 
 denotes Ap\'{e}ry's constant. 
 This provides a harmonic sum 
 analogue of Guillera's formula, and we encourage a full exploration of the derivation of such results. 

\subsection*{Acknowledgements}
 The author was supported through a Killam Postdoctoral Fellowship from the Killam Trusts.

 \

{\textsc{John M. Campbell} } 

{\footnotesize Department of Mathematics and Statistics}

{\footnotesize Dalhousie University, Canada} 

{\footnotesize {\tt jmaxwellcampbell@gmail.com}}

\end{document}